\def\divdots{\rlap{\raisebox{-1pt}{.}}{\rlap{\raisebox{2pt}{.}}\raisebox{5pt}{.}}}
\def\ndivby{\mathrel{
    \divdots
    \kern-0.35em\raise0.22ex\hbox{/}
}}
\renewcommand{\leq}{\leqslant}
\renewcommand{\geq}{\geqslant}
\def\fs{\kern 0.5em}
\newcounter{prcnt}
\newcounter{pucnt}
\newcommand{\prmain}[1]{ 
    \medskip%
    \setcounter{pucnt}{0}%
    \stepcounter{prcnt}%
    \noindent\textbf{%
    \theprcnt%
    \ifthenelse{\equal{#1}{1}}{*}{}%
    .}\fs%
}
\newcommand{\pumain}[1]{
    \stepcounter{pucnt}{%
    \noindent\bf(\alph{pucnt}%
    \ifthenelse{\equal{#1}{1}}{*}{}%
    )}\fs%
}
\newtheorem{theorem}{Theorem}
\newtheorem{lemma}{Lemma}
\newtheorem*{theorem*}{Теорема}
\newtheorem{corollary}{Corollary}
\theoremstyle{definition}
\newtheorem{example}{Example}
\newtheorem{conjecture}{Conjecture}
\title{The number of trees in distance-hereditary graphs and their friends}
\author{Danila Cherkashin$^{\mathrm{a}}$ and Pavel Prozorov$^\mathrm{b}$\\
{\small~a. Institute of Mathematics and Informatics, Bulgarian Academy of Sciences,}\\
{\small Sofia, Bulgaria}\\
{\small~b. St. Petersburg State University, Russia.}}
\begin{document}

\maketitle

\begin{abstract}
Counting the number of spanning trees in specific classes of graphs has attracted increasing attention in recent years. In this note, we present unified proofs and generalizations of several results obtained in the 2020s. The main method is to study the behavior of the vertex (degree) enumerator of a distance-hereditary graph under the operations of copying vertices.

Ehrenborg conjecture says that a Ferrer--Young graph maximizes the number of spanning trees among bipartite graphs with the same degree sequence. The second result of this paper is the equivalence of the Ehrenborg conjecture and its polynomial form.

\end{abstract}

\section{Introduction}

Let \( G = (V, E) \) be a finite, simple, connected, undirected graph, and let \( |V|=n \). For a vertex \( v \), let \( N_G(v) = \{ u \in V : vu \in E \} \) denote the neighborhood of vertex \( v \), and let \( \deg_G(v) \) denote the degree of vertex \( v \) in graph \( G \). For a subset of vertices \( U \subset V \), define the \textit{induced subgraph} \( G[U] \) as the graph whose vertices are elements of \( U \), and whose edges are those edges of \( G \) both of whose endpoints lie in \( U \). 

A \textit{tree} is a connected graph with no cycles. A disjoint union of trees is called
a \textit{forest}. A \textit{rooted tree} is a tree with one distinguished vertex (the root). A forest
that consists of rooted trees is called a \textit{rooted forest}. A \textit{spanning tree} in a connected
graph $G$ is a (not induced) subgraph of $G$ that is a tree containing all the vertices of $G$. A spanning
forest in a graph $G$ is a subgraph of G that is a forest containing all the vertices
of $G$.
Let \( \mathcal{S}(G) \) denote the set of all spanning trees of graph \( G \), and let \( \tau(G) = |\mathcal{S}(G)| \) be their number. 

We denote a complete graph on \( n \) vertices by \( K_n \), a complete bipartite graph by $K_{n,m}$, where 
$n,m$ are the sizes of the parts, a cycle graph on $n$ vertices by $C_n$ and a path graph on $n$ vertices by $P_n$.


Number the vertices of graph \( G \) as \( v_1, \dots, v_n \), assign variables \( x_1, \dots, x_n \) to them, and define the \textit{vertex spanning (degree) enumerator}
\[
P_G(x_1, x_2, \dots, x_n) = \sum_{T \in \mathcal{S}(G)} \prod_{v \in V} x_v^{\deg_T(v) - 1}.
\]

The \textit{extension} of a graph \( G \) is a graph \( \widetilde{G} \) obtained by adding a vertex \( v_0 \) to \( G \), connected to all vertices. It is well-known~\cite{pak1990enumeration} that \( P_{\widetilde{G}}(x_0, x_1, x_2, \dots, x_n) \) enumerates the \textit{rooted} spanning forests in graph \( G \), and that the identity
\[
P_G(x_1, x_2, \dots, x_n) \cdot (x_1 + x_2 + \dots + x_n) = P_{\widetilde{G}}(0, x_1, x_2, \dots, x_n)
\]
holds.

\section{Classes of graphs}

\subsection{Distance-hereditary graphs}

A \textit{distance-hereditary graph} is a connected graph in which any connected induced subgraph preserves the distances between any pair of vertices. More details on distance-hereditary graphs can be found, for example, in~\cite{bandelt1986distance} and the book~\cite{brandstadt1999graph}. In particular, this book provides the following equivalent definitions of distance-hereditary graphs:

\begin{itemize}
    \item[(i)] These are graphs in which any induced path is a shortest path.
    \item[(ii)] These are graphs in which any cycle of length at least five has two or more diagonals, and any cycle of exactly five has at least one pair of intersecting diagonals.
    \item[(iii)] These are graphs in which any cycle of length five or more has at least one pair of intersecting diagonals.
    \item[(iv)] These are graphs in which, for any four vertices \( u \), \( v \), \( w \), and \( x \), at least two of the three sums of distances \( d(u,v) + d(w,x) \), \( d(u,w) + d(v,x) \), and \( d(u,x) + d(v,w) \) are equal.
    \item[(v)] These are graphs that do not contain the following induced subgraphs: a cycle of length five or more, a gem, a house, or a domino (see Fig.~\ref{fig:forbidden}).
\begin{figure}[H]
    \centering
        \hfill  
   \begin{tikzpicture}[scale=1.8]
    
    \clip(1.5,0.8) rectangle (3.5,2.8);

    \coordinate(a1) at (2,1);
    \coordinate(a2) at (3,1);
    \coordinate(a3) at (3.3,1.9);
    \coordinate(a4) at (2.5,2.6);
    \coordinate(a5) at (1.7,1.9);

    \draw [blue,ultra thick] (a1)--(a2)--(a3)--(a4)--(a5);
    \draw [blue,ultra thick,dashed] (a5)--(a1);

    \fill (a1) circle (1.2pt);
    \fill (a2) circle (1.2pt);
    \fill (a3) circle (1.2pt);
    \fill (a4) circle (1.2pt);
    \fill (a5) circle (1.2pt);

\end{tikzpicture}
     \hfill
   \begin{tikzpicture}[scale=1.8,rotate=180]
    
    \clip(1.5,0.8) rectangle (3.5,2.8);

    \coordinate(a1) at (2,1);
    \coordinate(a2) at (3,1);
    \coordinate(a3) at (3.3,1.4);
    \coordinate(a4) at (2.5,2.6);
    \coordinate(a5) at (1.7,1.4);

    \draw [blue,ultra thick] (a1)--(a2)--(a3)--(a4)--(a5)--(a1);
    \draw [blue,ultra thick] (a1)--(a4)--(a2);
    
    \fill (a1) circle (1.2pt) node[right]{4};
    \fill (a2) circle (1.2pt) node[left]{3};
    \fill (a3) circle (1.2pt) node[left]{2};
    \fill (a4) circle (1.2pt) node[below]{1};
    \fill (a5) circle (1.2pt) node[right]{5};

\end{tikzpicture}
    \hfill 
    \begin{tikzpicture}[scale=1.8]
    
    \clip(1.5,0.8) rectangle (3.5,2.8);

    \coordinate(a1) at (2,1);
    \coordinate(a2) at (3,1);
    \coordinate(a3) at (3.3,1.9);
    \coordinate(a4) at (2.5,2.6);
    \coordinate(a5) at (1.7,1.9);

    \draw [blue,ultra thick] (a1)--(a2)--(a3)--(a4)--(a5)--(a1);
    \draw [blue,ultra thick] (a3)--(a5);

    \fill (a1) circle (1.2pt) node[below]{1};
    \fill (a2) circle (1.2pt) node[below]{5};
    \fill (a3) circle (1.2pt) node[below right]{4};
    \fill (a4) circle (1.2pt) node[below]{3};
    \fill (a5) circle (1.2pt) node[below left]{2};

\end{tikzpicture}
    \hfill 
    \begin{tikzpicture}[scale=1.8]
    
    \clip(1.5,0.8) rectangle (3.5,2.8);

    \coordinate(a1) at (1.7,1);
    \coordinate(a2) at (2.5,1);
    \coordinate(a3) at (3.3,1);
    \coordinate(a4) at (1.7,2.6);
    \coordinate(a5) at (2.5,2.6);
    \coordinate(a6) at (3.3,2.6);

    \draw [blue,ultra thick] (a1)--(a2)--(a3)--(a6)--(a5)--(a4)--(a1);
    \draw [blue,ultra thick] (a2)--(a5);

    \fill (a1) circle (1.2pt) node[below left]{6};
    \fill (a2) circle (1.2pt) node[below left]{1};
    \fill (a3) circle (1.2pt) node[below left]{2};
    \fill (a4) circle (1.2pt) node[below left]{5};
    \fill (a5) circle (1.2pt) node[below left]{4};
    \fill (a6) circle (1.2pt) node[below left]{3};

\end{tikzpicture}
    \caption{Forbidden induced subgraphs from left to right: long cycle, gem, house, and domino}
    \label{fig:forbidden}
\end{figure}
    \item[(vi)] These are graphs that can be constructed from a single vertex through a sequence of the following three operations:
    \begin{itemize}
        \item Adding a new pendant vertex connected by a single edge to an existing vertex.
        \item Replacing any vertex with a pair of vertices, each having the same neighbors as the removed vertex (a \textit{duplication without adding an edge}).
        \item Replacing any vertex with a pair of vertices, each having the same neighbors as the removed vertex, including the other vertex in the pair (a \textit{duplication with adding an edge}).
\end{itemize}
    
\end{itemize}

Another linear-algebraic definition was later introduced in~\cite{oum2005rank}.

Note that duplication operations (with or without adding an edge) can be applied not only to a distance-hereditary graph.

\subsection{Subclasses of distance-hereditary graphs}

\paragraph{A cograph} is a graph that does not contain a path on four vertices \( P_4 \) as an induced subgraph. 
A permutation graph is a graph in which vertices correspond to elements of a permutation, and edges correspond to transpositions. A permutation is called \textit{separable} if it does not contain the patterns 2413 and 3142. The class of cographs coincides with the class of graphs of separable permutations~\cite{bose1998pattern}. Since any graph in Fig.~\ref{fig:forbiddenThres} contains an induced \( P_4 \), any cograph is distance-hereditary.

\paragraph{A Ferrers--Young graph} is a bipartite graph whose vertices correspond to the rows and columns of a Ferrers--Young diagram, with an edge between a row and column if the corresponding cell is in the diagram. Clearly, any Ferrers--Young graph is distance-hereditary, as it preserves distances when transitioning to a connected induced subgraph (which is also a Ferrers--Young graph).

\paragraph{A threshold graph} is a graph that does not contain \( P_4 \), \( C_4 \), and \( 2K_2 \) as an induced subgraph. From this definition, it is clear that any threshold graph is a cograph and therefore a distance-hereditary graph.

\begin{figure}[H]
    \centering
    \begin{tikzpicture}[scale=1.5, every node/.style={circle, draw, fill=black, inner sep=1.5pt}, edge/.style={ultra thick, blue}]

\node (A1) at (0,1) {};
\node (A2) at (0,0) {};
\node (A3) at (1,1) {};
\node (A4) at (1,0) {};
\draw[edge] (A1) -- (A2);
\draw[edge] (A3) -- (A4);
\node [draw=none,fill=none] at (0.5,-0.3) {$2K_2$};

\node (B1) at (2.5,1) {};
\node (B2) at (2.5,0) {};
\node (B3) at (3.5,1) {};
\node (B4) at (3.5,0) {};
\draw[edge] (B1) -- (B2);
\draw[edge] (B3) -- (B4);
\draw[edge] (B1) -- (B3);
\node [draw=none,fill=none] at (3,-0.3) {$P_4$};

\node (C1) at (5,1) {};
\node (C2) at (5,0) {};
\node (C3) at (6,1) {};
\node (C4) at (6,0) {};
\draw[edge] (C1) -- (C2);
\draw[edge] (C2) -- (C4);
\draw[edge] (C4) -- (C3);
\draw[edge] (C3) -- (C1);
\node [draw=none,fill=none] at (5.5,-0.3) {$C_4$};

\end{tikzpicture}
    \caption{Forbidden induced subgraphs of a threshold graph}
    \label{fig:forbiddenThres}
\end{figure}

A special 2-threshold graph is a graph in which \( 2K_2 \), \( C_5 \), house, gem, net, Diamond+2P, \( W_4 + P \), and octahedron are forbidden.

\section{Methods}

\subsection{The matrix tree theorem}

The following theorem is one of the earliest results in algebraic combinatorics.

\begin{theorem}[Kirchhoff~\cite{kirchhoff1847ueber}, 1847]
    The number of spanning trees $\tau(G)$ in a graph $G$ is equal to any algebraic cofactor of the Laplace matrix $L(G)$.
\end{theorem}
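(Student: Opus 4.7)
The plan is to prove this classical identity via the Cauchy--Binet formula applied to the signed vertex-edge incidence matrix of $G$. First I would fix an arbitrary orientation of the edges and define the matrix $B$ with rows indexed by vertices and columns by edges, where $B_{v,e}$ equals $+1$ if $v$ is the head of $e$, $-1$ if $v$ is the tail, and $0$ otherwise. A direct computation gives $L(G) = B B^T$: the diagonal entry equals $\deg_G(v)$, and the off-diagonal entry $(u,v)$ equals $-1$ for each edge joining $u$ and $v$, matching the definition of the Laplacian.

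Next I would verify that all algebraic cofactors of $L(G)$ coincide, so that ``any cofactor'' is well-defined. Because $L$ is symmetric with vanishing row sums, the all-ones vector lies in its kernel and $\det(L) = 0$; the identity $L \cdot \mathrm{adj}(L) = \det(L) \cdot I = 0$ then forces every column of $\mathrm{adj}(L)$ to be a scalar multiple of the all-ones vector, and the transposed identity forces the same of every row. Thus $\mathrm{adj}(L)$ is a constant multiple of the all-ones matrix, so all cofactors of $L$ are equal.

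Now fix a vertex $v_0$ and let $B_0$ be the matrix obtained from $B$ by deleting the row corresponding to $v_0$. The cofactor obtained by deleting the $v_0$-row and $v_0$-column of $L$ equals $\det(B_0 B_0^T)$, which by the Cauchy--Binet formula becomes
\[
\det(B_0 B_0^T) \;=\; \sum_{S} \bigl(\det B_0[S]\bigr)^2,
\]
where $S$ ranges over $(n-1)$-subsets of $E(G)$ and $B_0[S]$ is the corresponding $(n-1) \times (n-1)$ submatrix.

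The combinatorial core, and the step I expect to demand the most care, is the claim that $\det B_0[S] \in \{-1, 0, +1\}$, with a nonzero value occurring exactly when $S$ is the edge set of a spanning tree of $G$. If $S$ contains a cycle, then signing its edges according to their orientation around the cycle produces a nontrivial linear dependence among the columns of $B_0[S]$, so the determinant vanishes; since $|S| = n - 1$, the only alternative is that $S$ is acyclic and spans $G$, i.e.\ forms a spanning tree. Assuming this, I would induct on $n$: a tree with $n$ vertices has a leaf $u \ne v_0$, and the row of $B_0[S]$ indexed by $u$ contains a single $\pm 1$, so Laplace expansion along that row reduces the statement to the same claim for the tree with $u$ removed. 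Summing the squared $\pm 1$ contributions over all spanning trees yields exactly $\tau(G)$, completing the argument.
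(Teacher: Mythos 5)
The paper does not prove this statement: it is quoted as Kirchhoff's classical theorem with a citation to the 1847 paper, so there is no in-text argument to compare yours against. Your proposal is the standard and correct Cauchy--Binet proof, and all the essential steps are present: the factorization $L = BB^T$, the reduction of ``any cofactor'' to a single one via $L\cdot\mathrm{adj}(L)=0$, Cauchy--Binet over $(n-1)$-subsets of edges, and the identification of the nonvanishing minors with spanning trees via the cycle-dependence and leaf-expansion arguments. The only point where you are slightly terse is the claim that every column of $\mathrm{adj}(L)$ is a scalar multiple of the all-ones vector: this needs $\ker L$ to be one-dimensional, i.e.\ $G$ connected (which the paper's standing assumptions grant; for disconnected $G$ the rank of $L$ drops below $n-1$, $\mathrm{adj}(L)=0$, and the statement holds trivially with $\tau(G)=0$). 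It would be worth one sentence to record the standard computation $x^T L x = \sum_{uv\in E}(x_u-x_v)^2$ showing that the kernel consists of vectors constant on components. With that addition the argument is complete.
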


In general, calculating determinants for specific graphs can be quite tedious.

\subsection{Rank-one perturbation}

Klee and Stamps noticed that it is possible to adjust the Laplace matrix slightly to simplify counting.

\begin{theorem}[Klee--Stamps~\cite{klee2020linear}, 2020]
    Let $a$ and $b$ be arbitrary vectors, and let $L(G)$ be the Laplace matrix of a graph $G$. Then
    \[
    \det \left( L(G) + a \cdot b^T \right) = \left( \sum_{i=1}^n a_i \right) \left( \sum_{i=1}^n b_i \right) \tau(G).
    \]
\end{theorem}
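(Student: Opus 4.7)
The plan is to apply the classical matrix determinant lemma in its general (possibly singular) form:
\[
\det(M + a b^T) = \det(M) + b^T \operatorname{adj}(M)\, a,
\]
valid for any square $M$ and vectors $a,b$. Taking $M = L(G)$ immediately reduces the statement to two facts about the Laplacian: (i) $\det L(G)=0$, and (ii) $\operatorname{adj} L(G) = \tau(G)\, J$, where $J$ is the all-ones matrix.

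Fact (i) is standard: every row of $L(G)$ sums to $0$, so $L(G)\mathbf{1}=0$ and $L(G)$ is singular. For fact (ii), I would argue as follows. If $G$ is connected, then $L(G)$ has rank $n-1$ and $\ker L(G) = \operatorname{span}(\mathbf{1})$. From the adjugate identities $L(G)\operatorname{adj} L(G) = \operatorname{adj} L(G)\, L(G) = \det(L(G))\, I = 0$, each column of $\operatorname{adj} L(G)$ lies in $\ker L(G)$ and each row lies in the left kernel; since $L(G)$ is symmetric, both equal $\operatorname{span}(\mathbf{1})$. Hence $\operatorname{adj} L(G) = c\, \mathbf{1}\mathbf{1}^T = cJ$ for some scalar $c$. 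Kirchhoff's theorem identifies the diagonal entries as $\tau(G)$, forcing $c=\tau(G)$. If $G$ is disconnected, the rank of $L(G)$ drops to at most $n-2$, so every $(n-1)\times(n-1)$ minor vanishes, giving $\operatorname{adj} L(G)=0$, and simultaneously $\tau(G)=0$, so the identity still reads $0=0$.

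Putting it together, with $\det L(G) = 0$ and $\operatorname{adj} L(G) = \tau(G)J$, the matrix determinant lemma gives
\[
\det\!\bigl(L(G) + a b^T\bigr) = b^T (\tau(G) J)\, a = \tau(G)\, (\mathbf{1}^T b)(\mathbf{1}^T a) = \Bigl(\sum_{i=1}^n a_i\Bigr)\Bigl(\sum_{i=1}^n b_i\Bigr) \tau(G),
\]
which is the desired formula.

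The only genuinely delicate step is (ii), the identification $\operatorname{adj} L(G) = \tau(G)J$. The argument I sketched, based on the rank-one structure of the adjugate of a matrix of corank $1$, is entirely routine, so the main obstacle is really a bookkeeping one: citing the general form of the matrix determinant lemma (valid for singular $M$ via the adjugate) rather than its more familiar invertible version $\det(M)(1+b^TM^{-1}a)$. An equivalent route, if one prefers to avoid the adjugate, is to multiply out $\det(L(G)+ab^T)$ via a cofactor expansion and linearity in one row/column at a time, using Kirchhoff's theorem on the resulting cofactors and the vanishing of off-diagonal cofactors that follows from $L(G)\mathbf{1}=0$; the calculation leads to the same answer.
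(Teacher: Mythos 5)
The paper does not actually prove this statement---it is quoted from Klee and Stamps with a citation only---so there is no internal proof to compare against. Your argument is correct and complete. The general (adjugate) form of the matrix determinant lemma, $\det(M+ab^T)=\det(M)+b^T\operatorname{adj}(M)\,a$, is valid for singular $M$ by the usual density/polynomial-identity argument, and your two inputs both check out: $\det L(G)=0$ because $L(G)\mathbf{1}=0$, and $\operatorname{adj}L(G)=\tau(G)J$ via the corank-one structure of the kernel plus Kirchhoff's theorem for the diagonal entries (you could even shortcut this step, since the version of Kirchhoff's theorem stated in the paper already asserts that \emph{every} algebraic cofactor equals $\tau(G)$, which is exactly the statement $\operatorname{adj}L(G)=\tau(G)J$ for symmetric $L$). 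Your handling of the disconnected case is also right, though strictly unnecessary under the paper's standing assumption that $G$ is connected. The only point worth flagging is the one you flag yourself: one must invoke the adjugate form of the determinant lemma rather than the invertible form $\det(M)(1+b^TM^{-1}a)$, since $L(G)$ is never invertible; you do this correctly.
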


In a series of papers~\cite{klee2019linear,klee2020linear,go2022spanning}, this method was mainly used to bring the graph's Laplacian into an upper triangular form. The result is the following theorem.

\begin{theorem}[Go--Xuan--Luo--Stamps~\cite{go2022spanning}, 2022]
    The class of graphs for which there exist vectors $a$ and $b$ that transform $L(G) + a \cdot b^T$ into an upper triangular matrix coincides with the class of special 2-threshold graphs.
\end{theorem}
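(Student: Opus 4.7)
The plan is to replace the algebraic triangularization condition by a transparent combinatorial one, verify one direction by heredity plus a finite case check, and attack the other by a structural induction.

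First, I would observe that $L(G) + ab^T$ is upper triangular in the ordering $v_1, \ldots, v_n$ exactly when $a_i b_j = -L_{ij} = [v_i v_j \in E]$ holds for every pair $i > j$. Since the right-hand side lies in $\{0,1\}$, this is equivalent to the existence of subsets $A = \{i : a_i \ne 0\}$ and $B = \{j : b_j \ne 0\}$ of $[n]$ such that for all $i > j$, $v_i v_j \in E$ iff $i \in A$ and $j \in B$ (one may then set $a_i = b_j = 1$ on the supports and the nonzero products are automatically $1$). Call an ordering admitting such $(A,B)$ a \emph{rectangular ordering}. The theorem then reduces to the combinatorial claim that a graph admits a rectangular ordering iff it is special 2-threshold, i.e.\ contains no induced $2K_2$, $C_5$, house, gem, net, Diamond$+2P$, $W_4+P$, or octahedron.

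For the easy direction, note that the rectangular property is hereditary under taking induced subgraphs: restricting the ordering to $U \subseteq V$ and intersecting $A, B$ with the retained indices preserves the ``iff''. It therefore suffices to verify by direct enumeration that each of the eight forbidden graphs admits no rectangular ordering. Each has at most seven vertices and, modulo its automorphism group, leaves only a handful of orderings to test. For instance, in $2K_2$ with edges $e_1, e_2$, the later endpoint (in the ordering) of each edge must lie in $A$ and its partner in $B$; but then the below-diagonal cell pairing the later endpoint of one edge with the earlier endpoint of the other lies in $A \times B$ and forces a missing edge. Analogous short contradictions dispatch the remaining seven graphs.

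The converse is the main obstacle. I would induct on $|V(G)|$ and prove that every $G$ avoiding the eight forbidden subgraphs contains a \emph{peelable} vertex $v$: one for which $G - v$ admits a rectangular ordering whose $B$-set on the first $n-1$ positions picks out exactly the index set of $N_G(v)$, so that appending $v$ at position $n$ with $n \in A$ extends the rectangular ordering. Isolated vertices, universal vertices, and true or false twins of already-ordered vertices provide the elementary peelable configurations. The structural lemma to prove is: if $G$ admits no peelable vertex, then $G$ must induce one of the eight forbidden graphs. I would analyze a minimal obstruction by looking at pairs of vertices whose neighborhoods are incomparable enough to block every potential peeling, and in each case extract the matching forbidden subgraph -- two disjoint induced edges forcing $2K_2$ or its refinements (house, Diamond$+2P$), an induced five-hole forcing $C_5$, and local false-twin or dominating configurations forcing the gem, net, $W_4+P$, or octahedron. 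Establishing exhaustiveness -- ruling out any ninth minimal obstruction -- is the technical heart of the argument and where I expect the most careful case analysis.
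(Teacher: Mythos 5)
Note first that the paper does not prove this statement at all: it is quoted verbatim from Go--Xuan--Luo--Stamps, so there is no internal argument to compare yours against. On its own merits, your reduction of triangularizability to a ``rectangular ordering'' is correct: the below-diagonal condition reads $a_i b_j = [v_iv_j\in E]$ for $i>j$, and taking $A=\mathrm{supp}(a)$, $B=\mathrm{supp}(b)$ (respectively, putting $0$--$1$ vectors on given $A,B$) gives the stated equivalence. The easy direction is also sound: the rectangular property is hereditary under induced subgraphs, and your $2K_2$ argument is a correct template for the finite check, although the other seven forbidden graphs are asserted rather than verified.

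The genuine gap is the converse, which is the entire content of the theorem. You reduce it to the claim that every graph avoiding the eight obstructions contains a ``peelable'' vertex, but you neither prove that such a vertex exists nor that its absence forces one of the eight forbidden subgraphs; you explicitly defer ``the technical heart'' (exhaustiveness of the case analysis). Worse, the peeling scheme as formulated is likely too rigid to close by induction: the inductive hypothesis only yields that $G-v$ admits \emph{some} rectangular ordering, whereas peelability demands an ordering of $G-v$ whose $B$-set is exactly the index set of $N_G(v)$. That is a substantially stronger statement, and a single induction appending one vertex at the top of the order will generally need to control the sets $A$ and $B$ simultaneously (or build the order from both ends), which your sketch does not address. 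As it stands, the hard direction is a plan with its key lemma unproven, so the proposal does not constitute a proof of the theorem.
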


In this way, one can explicitly calculate the number of trees in a special 2-threshold graph. Moreover, it is possible to write out its vertex (degree) enumerator. We will discuss this in greater generality.

\subsection{Vertex (degree) enumerator}

Recall that the \textit{vertex (degree) spanning enumerator} is defined as the following polynomial:
\[
P_G(x_1, x_2, \dots, x_n) = \sum_{T \in \mathcal{S}(G)} \prod_{v \in V} x_v^{\deg_T(v) - 1},
\]
where the variables $x_v$ correspond to vertices, and $\deg_T(v)$ denotes the degree of vertex $v$ in tree $T$.

The following theorem is the main result of the paper~\cite{cherkashin2023stability}.

\begin{theorem}[Cherkashin--Petrov--Prozorov~\cite{cherkashin2023stability}] \label{th:oldmain}
    The statements (i)--(iii) are equivalent.
    \begin{itemize}
        \item[(i)] A graph $G$ is distance-hereditary.
        \item[(ii)] The polynomial $P_G$ factors into linear terms.
        \item[(iii)] The polynomial $P_G$ is real stable (i.e., it does not vanish when substituting any variables from the open upper complex half-plane).
    \end{itemize}
\end{theorem}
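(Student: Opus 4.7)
The plan is to establish the cyclic chain $\mathrm{(i)}\Rightarrow\mathrm{(ii)}\Rightarrow\mathrm{(iii)}\Rightarrow\mathrm{(i)}$, using the constructive characterization (vi) for the forward implication and the forbidden-subgraph characterization (v) to close the loop.

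\textbf{Step 1: $\mathrm{(i)}\Rightarrow\mathrm{(ii)}$.} I would induct on the number of vertices using (vi). The base case of a single vertex gives $P_G=1$. For the inductive step I need to show that each of the three generating operations preserves the property of factoring into linear forms. If a pendant vertex $v'$ is attached to $v_i$, every spanning tree of the new graph $G'$ uses the edge $v_iv'$, so $\deg_{T'}(v')=1$ while $\deg_{T'}(v_i)$ grows by one, giving
\[
P_{G'}(x_1,\dots,x_n,x')=x_i\cdot P_G(x_1,\dots,x_n),
\]
which adjoins one linear factor. For a twin duplication at $v_i$ (with or without the edge $v_iv'$), I expect a transformation of the shape
\[
P_{G'}(x_1,\dots,x_n,x')=L(x_i,x')\cdot P_G(x_1,\dots,x_i+x',\dots,x_n),
\]
where $L$ is a linear form whose exact coefficients depend on whether the edge $v_iv'$ is present; substituting $x_i\mapsto x_i+x'$ in a product of linear forms produces another product of linear forms, so the induction closes. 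The derivation of this duplication formula is the heart of the proof. I would obtain it either from the Matrix-Tree theorem — exploiting the fact that the Laplacian of $G'$ has two nearly identical rows/columns indexed by $v_i$ and $v'$, which a unimodular operation collapses to the Laplacian of $G$ with $x_i+x'$ in place of the variable for $v_i$ — or by a bijection partitioning the spanning trees of $G'$ according to how they split the edges incident to the twin pair $\{v_i,v'\}$.

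\textbf{Step 2: $\mathrm{(ii)}\Rightarrow\mathrm{(iii)}$.} Since $P_G$ has nonnegative integer coefficients, each linear factor in any factorization can be chosen with nonnegative real coefficients. A linear form $\sum_j c_jx_j$ with all $c_j\geq 0$ is real stable because $\operatorname{Im}\bigl(\sum_j c_jz_j\bigr)>0$ whenever every $\operatorname{Im}(z_j)>0$ and not all $c_j$ vanish. Real stability is preserved under products and nonzero scalar multiplication, so $P_G$ is real stable.

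\textbf{Step 3: $\mathrm{(iii)}\Rightarrow\mathrm{(i)}$.} I argue by contraposition using characterization (v). Suppose $G$ contains one of $C_k$ ($k\geq 5$), the gem, the house, or the domino as an induced subgraph $H$. The strategy is to reduce $P_G$ to a polynomial in the variables of $H$ via stability-preserving operations — specialization of the remaining variables to nonnegative reals, partial differentiation, and extraction of the leading coefficient in $x_j$ as $x_j\to\infty$, all of which are known to preserve real stability — and then to verify by direct finite computation that for each of the four forbidden $H$ the resulting polynomial is not real stable. The main obstacle is the reduction step: the specializations must be chosen so that the limiting polynomial is genuinely controlled by the combinatorics of $H$ rather than vanishing identically or acquiring an accidental linear factorization. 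Once the reduction is in place, the four forbidden configurations reduce to short, explicit non-stability witnesses (e.g.\ exhibiting a common complex upper-half-plane zero of the limit polynomial).
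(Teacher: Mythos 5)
First, note that the paper does not actually prove Theorem~\ref{th:oldmain}: the statement is quoted from~\cite{cherkashin2023stability}, and the present paper only re-derives the implication (i)$\Rightarrow$(ii), in a stronger form, from Theorem~\ref{th:main} via Corollary~\ref{cor:main}. So only your Step~1 can be checked against an argument in the text. There your outline agrees with the paper's route, but your guessed duplication formula is wrong: the extracted linear factor is not a form $L(x_i,x')$ in the two twin variables alone. By Theorem~\ref{th:main} (or a direct check on, say, duplicating a leaf of $P_3$), duplication of $v_i$ without an edge gives $P_{G'}=\bigl(\sum_{u\in N_G(v_i)}x_u\bigr)\cdot P_G(\dots,x_i+x',\dots)$, while duplication with an edge gives $P_{G'}=\bigl(x_i+x'+\sum_{u\in N_G(v_i)}x_u\bigr)\cdot P_G(\dots,x_i+x',\dots)$; the neighbour sum cannot be omitted. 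This does not break your induction (any linear factor would do), but the identity as you stated it is false, and deriving the correct one --- by your Laplacian row-collapsing idea or by the forest-splitting argument the paper uses to prove Theorem~\ref{th:main} --- is indeed the real content of this step. Step~2 is correct but needs the one-line justification you skipped: $P_G$ is homogeneous with nonnegative coefficients, hence positive on the open positive orthant, hence each linear factor is nonvanishing on that connected set and therefore has all coefficients of one sign.

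The genuine gap is Step~3, which is the hardest direction of the theorem and the one about which the present paper says nothing. There is no stability-preserving operation that turns $P_G$ into $P_H$ for an induced subgraph $H$: spanning trees do not restrict to induced subgraphs, and specializing the variables outside $V(H)$ to nonnegative reals, differentiating, or extracting leading coefficients all yield polynomials that still aggregate over spanning trees of all of $G$, with no visible control by the combinatorics of $H$ alone. You correctly identify this reduction as the main obstacle, but you do not supply it, and it is not a routine verification; the finite non-stability check on the four forbidden configurations is the easy part by comparison. As it stands, (iii)$\Rightarrow$(i) is a plan rather than a proof, and the cited source~\cite{cherkashin2023stability} must be consulted (or its argument reconstructed) to close the cycle of implications.
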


For counting the number of spanning trees and rooted forests (and specifically, in this paper), we are interested in a constructive proof of the implication from (i) to (ii). In the next chapter, we will prove it in a more general form than in~\cite{cherkashin2023stability}; see Corollary~\ref{cor:main}. However, we remind the reader of the definition of real stability for a polynomial (of several variables), which means that a polynomial with real coefficients does not become zero when substituting any values from the upper half-plane.

In the article~\cite{pozorov2025stability}, Theorem~\ref{th:oldmain} is generalized to weighted graphs with complex coefficients. The generalization shows the equivalence of the corresponding points (ii), (iii), and some weighted analogs of the definitions of distance-hereditary graphs. This allows us to define a class of weighted distance-hereditary graphs, with a somewhat loose naming convention, since the definition giving the class its name does not lend itself to such generalization.

\section{Main result}

The following theorem is the most general result on the factorization of the vertex spanning enumerator that we can prove.

\begin{theorem} \label{th:main}
Let graphs $G_1$ and $G_2$ be given, with a vertex $v_1$ marked in $G_1$ and a vertex $v_2$ marked in $G_2$. Consider a graph $H$ constructed as follows: take the disconnected union of $G_1$ and $G_2$, remove vertices $v_1$ and $v_2$, then add all edges between $N_{G_1}(v_1)$ and $N_{G_2}(v_2)$. Let the variables corresponding to vertices in $G_1$ be $x_1, \dots, x_n$, and those in $G_2$ be $y_1, \dots, y_m$, with the maximal numbers corresponding to the marked vertices. Then the following equality holds:
\[
P_H(x_1, \dots, x_{n-1}, y_1, \dots, y_{m-1})= P_{G_1} \left( x_1, \dots, x_{n-1}, \sum_{i \in N_{G_2}(v_2)} y_i \right) \cdot P_{G_2} \left( y_1, \dots, y_{m-1}, \sum_{i \in N_{G_1}(v_1)} x_i \right).
\]
\end{theorem}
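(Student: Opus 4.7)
The plan is to apply the vertex-weighted matrix--tree theorem combined with a Klee--Stamps-type rank-one perturbation chosen so that the perturbed Laplacian of $H$ becomes block-triangular. The first ingredient is the \emph{vertex-weighted Laplacian} defined by $\Lambda(G)_{uu} = x_u \sum_{v \sim u} x_v$ and $\Lambda(G)_{uv} = -x_u x_v$ for $uv \in E$; applied with edge weights $w_{uv} = x_u x_v$, the weighted matrix--tree theorem gives that every cofactor of $\Lambda(G)$ equals $P_G(x) \prod_{v\in V(G)} x_v$. Since $\Lambda(G)$ has zero row and column sums, its adjugate is a scalar multiple of $\mathbf{1}\mathbf{1}^T$, and the matrix determinant lemma yields
$$\det\bigl(\Lambda(G) + a b^T\bigr) = \Bigl(\sum_i a_i\Bigr)\Bigl(\sum_i b_i\Bigr)\, P_G(x) \prod_v x_v.$$

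Next I would expand $\Lambda(H)$ in the block form given by $V(H) = (V(G_1)\setminus\{v_1\}) \sqcup (V(G_2)\setminus\{v_2\})$. With $A = N_{G_1}(v_1)$, $B = N_{G_2}(v_2)$, $s_A = \sum_{a\in A} x_a$, $s_B = \sum_{b\in B} y_b$, vectors $\mathbf{x}_A, \mathbf{y}_B$ supported on $A$ and $B$ with entries $x_a$ and $y_b$, and diagonal matrices $D_A = \mathrm{diag}(\mathbf{x}_A)$, $D_B = \mathrm{diag}(\mathbf{y}_B)$, a direct calculation yields
$$\Lambda(H) = \begin{pmatrix} \Lambda(G_1 - v_1) + s_B D_A & -\mathbf{x}_A \mathbf{y}_B^T \\ -\mathbf{y}_B \mathbf{x}_A^T & \Lambda(G_2 - v_2) + s_A D_B \end{pmatrix}.$$
The essential step is to choose $a = (\mathbf{x}_A;\, 0)$ and $b = (0;\, \mathbf{y}_B)$: then $a b^T$ precisely cancels the upper-right cross block, making $\Lambda(H) + a b^T$ block lower triangular, while $\sum a_i = s_A$ and $\sum b_i = s_B$. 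Consequently
$$\det(\Lambda(H) + a b^T) = \det\bigl(\Lambda(G_1-v_1)+s_B D_A\bigr) \cdot \det\bigl(\Lambda(G_2-v_2)+s_A D_B\bigr).$$

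To conclude, applying the same block decomposition to $G_1$ shows that $\Lambda(G_1-v_1) + x_{v_1} D_A$ is the $v_1$-th principal submatrix of $\Lambda(G_1)$, so by the cofactor identity its determinant equals $x_{v_1}\, P_{G_1}(x_1,\dots,x_{n-1}, x_{v_1}) \prod_{u \neq v_1} x_u$; the analogous identity holds for $G_2$. Specialising $x_{v_1} = s_B$ and $x_{v_2} = s_A$, and equating the factored determinant with the Klee--Stamps-type identity $\det(\Lambda(H)+ab^T) = s_A s_B\, P_H \prod_u x_u \prod_w y_w$, the common factor $s_A s_B \prod_u x_u \prod_w y_w$ cancels and produces the claimed factorization of $P_H$. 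The only genuinely delicate point is the choice of perturbation vectors that simultaneously zero out the cross block of $\Lambda(H)$ and evaluate to the correct scalars $s_A, s_B$; once they are found, everything else reduces to direct determinant bookkeeping.
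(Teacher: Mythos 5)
Your argument is correct, but it takes a genuinely different route from the paper. The paper's proof is purely combinatorial: it decomposes every spanning tree of $H$ into a pair of forests $(L_1,L_2)$ (spanning forests of $G_1-v_1$ and $G_2-v_2$ whose components are indexed by the sets $A_i\subseteq N_{G_1}(v_1)$ and $B_j\subseteq N_{G_2}(v_2)$) together with a ``type'' tree $L_0\in\mathcal S(K_{t_1,t_2})$ recording which components are joined, and then sums the weights over each type using the explicit enumerator of $K_{t_1,t_2}$; matching the resulting expression term-by-term against the product of the two single-graph expansions gives the identity. You instead work with the vertex-weighted Laplacian $\Lambda$ (edge weights $x_ux_v$), observe that $\Lambda(H)$ has the block form with cross block $-\mathbf{x}_A\mathbf{y}_B^T$, and choose the rank-one perturbation $ab^T$ to kill that block, after which block-triangularity plus the identification of $\Lambda(G_i-v_i)+tD$ with a principal submatrix of $\Lambda(G_i)$ does the rest. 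Your computations check out: the block decomposition of $\Lambda(H)$, the adjugate argument for the weighted Klee--Stamps identity, and the specialization $x_{v_1}=s_B$, $x_{v_2}=s_A$ (legitimate as a polynomial identity in the deleted variable) are all correct, and the final cancellation of $s_As_B\prod x_u\prod y_w$ is valid in the polynomial ring provided $N_{G_1}(v_1)$ and $N_{G_2}(v_2)$ are nonempty (the degenerate cases are trivial). What your approach buys is a direct link to the Klee--Stamps rank-one-perturbation machinery already surveyed in Section 3.2 -- the factorization of $P_H$ becomes literally the block-triangularizability of a perturbed Laplacian -- at the cost of invoking the weighted matrix--tree theorem and some determinant bookkeeping; the paper's combinatorial proof is self-contained, exhibits the bijection behind the identity, and only needs the elementary formula for $P_{K_{t_1,t_2}}$.
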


\begin{proof}
Let us examine the spanning trees in $G_1$. Clearly, they are structured as follows: we take a certain forest $L_1$ in which the degree of vertex $v_1$ is zero ($\mathcal{S}_1$ is the set of such forests), and then attach each component of this forest to $v_1$ via a certain vertex. Let the forest $L_1$ have $t(L_1)$ connected components, then $N_{G_1}(v_1) = A_1 \cup A_2 \cup \dots \cup A_{t(L_1)}$, where all $A_i$ belong to the $i$-th connected component of the forest. Define the weight of a forest as $W(L_1) = \prod_{v \neq v_1} x_v^{\deg_{L_1}(v) - 1}$. Then we have
\[
P_{G_1}(x_1, x_2, \dots, x_{n-1}, x_n) = \sum_{L_1 \in \mathcal{S}_1} W(L_1) \prod_{i=1}^{t(L_1)} \left( \sum_{v \in A_i} x_v \right) x_n^{t(L_1) - 1}.
\]
Similarly for $G_2$ we can analogously define $\mathcal{S}_2$ and obtain
\[
P_{G_2}(y_1, y_2, \dots, y_{m-1}, y_m) = \sum_{L_2 \in \mathcal{S}_2} W(L_2) \prod_{i=1}^{t(L_2)} \left( \sum_{v \in B_i} y_v \right) y_m^{t(L_2) - 1}.
\]

Observe that a spanning tree in $H$ consists precisely of a pair of forests and a certain set of edges from $K_{N_{G_1}(v_1), N_{G_2}(v_2)}$. We now verify that the term in this polynomial corresponding to the pair of forests $(L_1, L_2)$ matches the product of the term for $L_1$ and the term for $L_2$ (that is, we verify this for fixed $L_1, L_2$).

Let $t_1 = t(L_1)$, $t_2 = t(L_2)$, and let $\sum_{v \in A_i} x_v = X_i$, $\sum_{v \in B_i} y_v = Y_i$. The term from the product is already calculated—it is
\[
\prod_{i=1}^{t_1} X_i \cdot \left( \sum_{v \in N_{G_2}(v_2)} y_v \right)^{t_1 - 1} \cdot \prod_{i=1}^{t_2} Y_i \cdot \left( \sum_{v \in N_{G_1}(v_1)} x_v \right)^{t_2 - 1}.
\]

Now, consider the term from $H$. Note that in $H$, it remains to select certain edges such that we obtain a spanning tree of the graph $K_{t_1, t_2}$, where vertices correspond to the sets $A_i$ and $B_j$. We call the corresponding spanning tree in the graph $K_{t_1, t_2}$ the type of tree, and let this spanning tree be $L_0$. Then let us look at the sum of all weights of trees of one type. Observe that each edge connecting vertices \( A_i \) and \( B_j \) in \( L_0 \) corresponds to any edge connecting a vertex from \( A_i \) and a vertex from \( B_j \), and the choice is independent for different edges. Therefore, the sum of the weights of all trees of type \( L_0 \) is
\[
\prod_{i=1}^{t_1}X_i^{\deg_{L_0}(A_i)} \cdot \prod_{i=1}^{t_2}Y_i^{\deg_{L_0}(B_i)}.
\]
Now it is clear that the total sum of all weights is
\[
\sum_{L_0 \in S(K_{t_1,t_2})}\prod_{i=1}^{t_1}X_i^{\deg_{L_0}(A_i)} \cdot \prod_{i=1}^{t_2}Y_i^{\deg_{L_0}(B_i)}= \prod_{i=1}^{t_1}X_i \cdot \prod_{i=1}^{t_2}Y_i \cdot P_{K_{t_1,t_2}}(X_1,X_2, \dots, X_{t_1}, Y_1, Y_2, \dots, Y_{t_2})=
\]
\[
= \prod_{i=1}^{t_1}X_i \cdot \prod_{i=1}^{t_2}Y_i \cdot \left( X_1+X_2 + \dots +X_{t_1}\right)^{t_2-1} \cdot (Y_1 +Y_2 + \dots Y_{t_2})^{t_1-1}=
\]
\[
= \prod_{i=1}^{t_1}X_i \cdot \prod_{i=1}^{t_2}Y_i \cdot \left( \sum_{v \in N_{G_1}(v_1)}x_v\right)^{t_2-1} \cdot  \left( \sum_{v \in N_{G_2}(v_2)}y_v\right)^{t_1-1},
\]
which is what we desired.
\end{proof}

\begin{corollary} \label{cor:main}
The vertex enumerator \( P_G \) of a distance-hereditary graph \( G \) factors into linear terms.
\end{corollary}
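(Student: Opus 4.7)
The plan is to prove Corollary~\ref{cor:main} by induction on the number of vertices of $G$, using the constructive characterization (vi) of distance-hereditary graphs. Every such $G$ is built from a single vertex by a sequence of three moves---adding a pendant, duplicating a vertex without adding an edge, and duplicating a vertex with adding an edge---so it suffices to check that each move, when applied to a graph whose vertex enumerator is already a product of linear forms, produces a graph with the same property. The base case is $K_2$, for which $P_{K_2}=1$ is the empty product of linear forms.

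The central observation is that each of the three moves can be realised as an instance of the construction of Theorem~\ref{th:main}, with $G_1$ taken to be the graph before the move (and its marked vertex $v_1$ taken to be the vertex acted on) and $G_2$ a small three-vertex gadget. Concretely, for the pendant move I take $G_2=P_3$ with $v_2$ at an endpoint; for the twin duplication without an edge I take $G_2=P_3$ with $v_2$ at the middle; and for the twin duplication with an edge I take $G_2=C_3$ with an arbitrary marked vertex. A routine check confirms that in each case the graph $H$ produced by Theorem~\ref{th:main} is exactly the image of $G_1$ under the corresponding move, and a direct computation gives $P_{P_3}=y_{\mathrm{middle}}$ and $P_{C_3}(y_1,y_2,y_3)=y_1+y_2+y_3$, both of which are (products of) linear forms.

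Once these gadgets are in place, Theorem~\ref{th:main} expresses $P_H$ as the product of $P_{G_1}$ (with its last variable replaced by a linear combination of the $y_i$'s) and $P_{G_2}$ (with its last variable replaced by a linear combination of the $x_i$'s). Since substituting a linear form for a single variable in a product of linear forms preserves the factored form, and since $P_{G_1}$ is such a product by the inductive hypothesis while $P_{G_2}$ is such a product as just observed, so is $P_H$. This closes the induction.

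The only real obstacle I anticipate is the bookkeeping involved in matching the three gadgets to the three moves---one must verify carefully that the surviving edges and neighbourhoods of $H$ really do reproduce the pendant or twin structure in each case, and that the relabelling of ``the vertex playing the role of $w$'' is consistent with the placement of the marked vertex of $G_2$. Once this correspondence is pinned down, the factorisation follows mechanically from Theorem~\ref{th:main}, and no deeper combinatorial or algebraic difficulty arises.
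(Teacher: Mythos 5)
Your proposal is correct and follows essentially the same route as the paper: it invokes characterization (vi), realizes each of the three constructive operations as an instance of Theorem~\ref{th:main} with exactly the same three-vertex gadgets ($K_3$ for duplication with an edge, $P_3$ marked at the middle for duplication without an edge, $P_3$ marked at an endpoint for a pendant), and closes by induction since the second factor is linear in each case. No substantive difference from the paper's argument.
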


\begin{proof}
We use definition~(vi) of the class of distance-hereditary graphs.
Duplicating vertex \( v \) in a graph \( G \) with an edge corresponds to the substitution \( G_1 = G \), \( v_1 = v \), \( G_2 = K_3 \) (and any \( v_2 \)). Duplicating vertex \( v \) in a graph \( G \) without an edge corresponds to \( G_1 = G \), \( v_1 = v \), \( G_2 = (\{1,2,3\},\{\{1,2\},\{1,3\}\}) \), \( v_2 = 1 \).
In these cases, the second factor is obviously linear.
Adding a pendant vertex connected to \( v \) multiplies the enumerator by the variable \( x_v \) (if desired, this can be viewed as the substitution \( G_1 = G \), \( v_1 = v \), \( G_2 = (\{1,2,3\},\{\{1,2\},\{1,3\}\}) \), \( v_2 = 2 \)).
A simple induction completes the proof.
\end{proof}

The following application of Theorem~\ref{th:main} gives a non-linear factorization of the enumerator polynomial.

\begin{example}
Define the graph \( G \) as two disjoint copies of \( C_5 \) with all possible edges between them.
Then its vertex spanning enumerator factors into two irreducible polynomials of degree 4.
\end{example}

\section{Consequences and applications}

\subsection{Some classic results}

The results for the number of trees (and explicit formulas for the enumerators) of threshold graphs~\cite{merris1994degree}, Ferrers--Young graphs~\cite{ehrenborg2004enumerative}, as well as the classical formulas for complete graphs and complete multipartite graphs are obtained as direct corollaries.

\subsection{Some recent results on spanning trees enumeration}

\paragraph{Superprism.} In this paragraph \( n + i = i \).
The \textit{Superprism} \( S_n \) is a 4-regular graph on vertices \( v_i \), \( u_i \), \( i = 1, \dots, n \), whose edges are unions of complete bipartite graphs with parts \( \{u_i,v_i\} \) and \( \{u_{i+1},v_{i+1}\} \) for \( i = 1, \dots, n \).

It is clear that
\[
P_{C_n}  =  z_1 z_2 \dots  z_n \sum_{i=1}^n \frac{1}{z_iz_{i+1}}.
\]
Now we duplicate each vertex without adding an edge between copies, which replaces \( z_i \) by \( x_i + y_i \) for each \( i \) from 1 to \( n \).
By consecutively applying Theorem~\ref{th:main} (recall that duplication without an edge corresponds to \( G_1 = G \), \( v_1 = v \), \( G_2 = (\{1,2,3\},\{\{1,2\},\{1,3\}\}) \), \( v_2 = 1 \)), we obtain
\[
P_{S_n} = \prod_{i=1}^n (x_i+y_i)  \left ( \sum_{i=1}^n \frac{1}{(x_i+y_i)(x_{i+1}+y_{i+1})} \right)  \prod_{i=1}^n (x_i + y_i + x_{i+2} + y_{i+2}).
\]
Substituting \( x_i = y_i = 1 \) gives

\begin{corollary}[Bogdanowicz,~\cite{bogdanowicz2024number}, 2024]
For \( n \geq 4 \), we have
\[
\tau(S_n) = n \cdot 2^{3n-2}.
\]
\end{corollary}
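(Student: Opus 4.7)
The plan is to read off $\tau(S_n)$ directly from the closed-form expression for $P_{S_n}$ derived immediately above the corollary. The only extra ingredient I need is the standard identity $P_G(1,1,\ldots,1) = \tau(G)$, which is immediate from the definition of the vertex spanning enumerator: every monomial $\prod_v x_v^{\deg_T(v)-1}$ collapses to $1$, so the sum over $T \in \mathcal{S}(G)$ just counts spanning trees. Hence my strategy is to substitute $x_i = y_i = 1$ for all $i$ into the three factors of the formula for $P_{S_n}$ and multiply.

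I would then evaluate each factor in turn. The leading product $\prod_{i=1}^{n}(x_i+y_i)$ specializes to $2^n$. Each summand of the middle factor equals $\tfrac{1}{(1+1)(1+1)} = \tfrac14$, so the sum over $i$ collapses to $\tfrac{n}{4}$. Every factor of the trailing product equals $1+1+1+1 = 4$, giving $\prod_{i=1}^{n} 4 = 2^{2n}$. Combining the three contributions,
\[
\tau(S_n) \;=\; 2^n \cdot \frac{n}{4} \cdot 2^{2n} \;=\; n \cdot 2^{3n-2},
\]
which is exactly the claim.

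The expected difficulty is essentially nil: all three factors are symmetric in the index $i$ and take a constant value under the substitution, so the evaluation reduces to counting a handful of powers of two. The hypothesis $n \ge 4$ plays no role in the substitution itself; it is needed only upstream, to ensure that the indices $i$, $i+1$, $i+2$ taken modulo $n$ are pairwise distinct, so that the iterated duplication of $C_n$ really produces the $4$-regular superprism without any edge or factor collisions.
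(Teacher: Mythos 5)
Your proof is correct and is exactly the paper's argument: the paper derives the formula for $P_{S_n}$ and obtains the corollary by the same substitution $x_i = y_i = 1$, with the three factors evaluating to $2^n$, $n/4$, and $2^{2n}$ as you compute. Nothing is missing.
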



\subsection{Some recent results on rooted spanning forests enumeration}

\begin{corollary}[Gao--Liu~\cite{gao2024tree}, 2024]
    The enumerator of rooted spanning forests of a cograph (the inversion graph corresponding to a separable permutation \( w \)) is
    \[
    P_{\widetilde{G_w}} (x_0, x_1, \dots , x_n) = \prod_{i=1}^{n-1} \left (x_0 + f^{(1)} + f^{(2)}_i \right),
    \]
    where
    \[
    f^{(1)}_i = \sum_{j \leq i, w(j)>w(i+1)} x_j \quad \text{and} \quad f^{(2)}_i = \sum_{j > i, w(j) < w(i)} x_j.
    \]
\end{corollary}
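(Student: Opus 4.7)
The plan is to induct on $n = |w|$, using that a separable permutation decomposes as either a direct sum $u \oplus v$ or a skew sum $u \ominus v$ of shorter separable permutations of lengths $k$ and $\ell = n - k$; correspondingly the inversion graph $G_w$ is either the disjoint union $G_u \sqcup G_v$ or the join $G_u * G_v$. The base case $n = 1$ is immediate since the empty product equals $1 = P_{K_2}$.

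In the direct sum case, no edge of $\widetilde{G_{u \oplus v}}$ joins $V(G_u)$ to $V(G_v)$ except through the universal vertex $v_0$, so every spanning tree of $\widetilde{G_w}$ splits uniquely as the union of a spanning tree of $\widetilde{G_u}$ and one of $\widetilde{G_v}$ sharing $v_0$. Reading off the degrees at $v_0$ gives $P_{\widetilde{G_w}} = x_0 \cdot P_{\widetilde{G_u}} \cdot P_{\widetilde{G_v}}$. A direct computation from the definitions shows that $f^{(1)}_i(w) + f^{(2)}_i(w)$ equals $f^{(1)}_i(u) + f^{(2)}_i(u)$ for $i < k$, vanishes at $i = k$ (which produces the extra $x_0$), and equals $f^{(1)}_{i-k}(v) + f^{(2)}_{i-k}(v)$ for $i > k$, so the inductive hypothesis reconstructs the claimed product.

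The skew sum case is the principal difficulty. Applying Theorem~\ref{th:main} with $G_1 = \widetilde{\widetilde{G_u}}$ marked at its second universal vertex and $G_2 = \widetilde{G_v}$ marked at its universal vertex realizes the resulting graph as $\widetilde{G_{u \ominus v}}$, producing
\[
P_{\widetilde{G_w}}(x_0, x_1, \ldots, x_n) = P_{\widetilde{\widetilde{G_u}}}(x_0, x_1, \ldots, x_k, S_v) \cdot P_{\widetilde{G_v}}(x_{k+1}, \ldots, x_n, x_0 + S_u),
\]
where $S_u = x_1 + \ldots + x_k$ and $S_v = x_{k+1} + \ldots + x_n$. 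The main obstacle is that $P_{\widetilde{\widetilde{G_u}}}$ is not directly covered by the inductive hypothesis. I would handle it by proving the auxiliary identity
\[
P_{\widetilde{\widetilde{G}}}(x_0, z, x_1, \ldots, x_n) = (x_0 + z + x_1 + \ldots + x_n) \cdot P_{\widetilde{G}}(x_0 + z, x_1, \ldots, x_n),
\]
valid for an arbitrary graph $G$. This lemma follows from the rooted-forest expansion of $P_{\widetilde{G}}$ by classifying spanning trees of $\widetilde{\widetilde{G}}$ according to the components of the forest obtained after deleting the two universal vertices and to whether the edge $v_0 v_1$ is used; the binomial sum over which universal vertex each forest component attaches to furnishes a $(x_0 + z)^{p-1}$ factor, while combining the two subcases pulls out the prefactor $(x_0 + z + S)$. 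Substituting the lemma at $z = S_v$ and invoking the inductive formulas for $P_{\widetilde{G_u}}$ and $P_{\widetilde{G_v}}$, it remains to verify that $f^{(1)}_i(u) + f^{(2)}_i(u) + S_v$, $S_u + S_v$, and $f^{(1)}_j(v) + f^{(2)}_j(v) + S_u$ are exactly $f^{(1)}_i(w) + f^{(2)}_i(w)$ at $i < k$, $i = k$, and $i = k + j$ respectively, which closes the induction.
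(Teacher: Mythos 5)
Your proof is correct and follows the route the paper intends: the paper states this result without proof as a corollary of Theorem~\ref{th:main}, and your induction on the direct-sum/skew-sum decomposition of a separable permutation, with the join case handled by applying Theorem~\ref{th:main} to $\widetilde{\widetilde{G_u}}$ and $\widetilde{G_v}$, is exactly how that theorem is meant to be used here. The only simplification worth noting is that your auxiliary identity $P_{\widetilde{\widetilde{G}}}(x_0,z,x_1,\dots,x_n)=\left(x_0+z+\sum_i x_i\right)\cdot P_{\widetilde{G}}(x_0+z,x_1,\dots,x_n)$ needs no separate combinatorial argument: it is itself an instance of Theorem~\ref{th:main} with $G_2=K_3$, namely duplication with an edge applied to the universal vertex of $\widetilde{G}$, exactly as in the proof of Corollary~\ref{cor:main}.
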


The corresponding formulas for complete multipartite graphs and split graphs have been studied in a recent series of works~\cite{dong2022counting},~\cite{yang2025counting}. The case of circulant graphs is considered in~\cite{mednykh2023cyclic}.

\section{Open questions}

\subsection{Ehrenborg’s conjecture}

\begin{conjecture}
Let \( G \) be a bipartite graph with parts \( V_1 \) and \( V_2 \). Then the inequality
\[
\tau(G) \cdot |V_1|\cdot |V_2| \leq \prod_{v\in V(G)} d_G(v) 
\]
holds.
\end{conjecture}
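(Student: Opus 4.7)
The plan is to leverage the equivalence between Ehrenborg's conjecture and its polynomial form (announced in the abstract as the second main result of the paper) and to attack the polynomial form using the factorization of vertex enumerators for distance-hereditary graphs, since Ferrer--Young graphs are distance-hereditary and should be the extremizers. As a first sanity check I would verify the boundary case: for the Ferrer--Young graph $F$ with the same degree sequence as $G$, Corollary~\ref{cor:main} factors $P_F$ into linear terms, so one can evaluate $\tau(F)=P_F(\mathbf{1})$ explicitly and compare it with $\prod_v d_F(v)/(|V_1|\cdot|V_2|)$. Ehrenborg's classical formulas show that equality holds for $F$, so the inequality would be sharp precisely at Ferrer--Young graphs.

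The second step is to reduce an arbitrary bipartite $G$ to $F$ by a sequence of local moves that preserve the degree sequence (hence keep the right-hand side fixed) and do not decrease $\tau$. The natural move is a bipartite compression: given a $2\times 2$ submatrix of the biadjacency matrix of the form $\left(\begin{smallmatrix}1&0\\0&1\end{smallmatrix}\right)$, replace it with $\left(\begin{smallmatrix}1&1\\0&0\end{smallmatrix}\right)$, which conserves row and column sums but pushes the edge set toward a Ferrers shape. This move is notoriously non-monotone for $\tau$ itself, and this is precisely where passing to the polynomial form should pay off: rather than comparing two numbers one compares two polynomials, and Theorem~\ref{th:main} lets us peel off the four vertices involved in the swap, turning the comparison into a concrete inequality between two products of linear forms.

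The main obstacle is the cumulative control of these local moves: a single swap is local, but showing that iterating them terminates at $F$ (and not at some intermediate $2K_2$-free obstruction) while preserving the polynomial inequality at every step is delicate, because spanning-tree counts are famously erratic under edge rearrangements. If direct compression proves too fragile, a plausible alternative is to bypass shifting altogether and argue by induction on $|V(G)|$: peel off a vertex of extremal degree via Theorem~\ref{th:main} and match the resulting factorization term-by-term against the analogous peeling of $F$. This in turn would require a quantitative stability version of Theorem~\ref{th:main} in which the two factorizations can be compared linear-factor by linear-factor, and I expect such a stability statement to be the decisive ingredient.
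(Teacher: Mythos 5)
The statement you are addressing is Ehrenborg's conjecture, which the paper records as an \emph{open problem}: it offers no proof of it, only a proof that the conjecture is equivalent to its polynomial form. Your proposal is accordingly a research plan rather than a proof, and its two load-bearing steps both fail as stated. First, routing the argument through the polynomial form buys nothing: the paper's equivalence theorem derives the polynomial form \emph{from} the numerical form (rationalize a hypothetical counterexample, clear denominators, and blow up each vertex $v_i$ into $z_i$ copies by duplication), so the polynomial form is exactly as strong as the conjecture itself and cannot serve as an independently provable stepping stone --- using it here is circular. Second, the bipartite compression step is precisely the open difficulty, and you concede it yourself: the $2\times 2$ switch toward a Ferrers shape preserves the degree sequence but is not monotone for $\tau$, and passing to polynomials does not repair this. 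Theorem~\ref{th:main} cannot be used to ``peel off the four vertices involved in the swap,'' because that theorem applies only to split compositions, i.e.\ graphs containing \emph{all} edges between $N_{G_1}(v_1)$ and $N_{G_2}(v_2)$ after the two marked vertices are removed; a generic $2\times 2$ submatrix of the biadjacency matrix of an arbitrary bipartite graph carries no such structure, so the ``concrete inequality between two products of linear forms'' is not available. The fallback induction via a ``quantitative stability version of Theorem~\ref{th:main}'' is only named, not supplied.

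The one sound observation in your plan is the boundary check: equality does hold for Ferrers--Young graphs (this follows from the explicit factorization of their enumerators, as the paper notes), so the conjectured inequality is sharp there. But sharpness at the conjectured extremizer is not evidence of a proof; every essential step between an arbitrary bipartite graph and the Ferrers--Young extremizer remains missing, and the conjecture remains open both in the paper and after your argument.
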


As we have already discussed, the equality holds for Ferrers--Young graphs. This conjecture can be strengthened, with equality still being achieved by substituting any non-negative weights on the vertices of a Ferrers--Young graph.

\begin{conjecture}
Let \( G \) be a bipartite graph with parts \( V_1 \) and \( V_2 \). Then, for any substitution \( x_1, \dots, x_n \geq 0 \), the inequality
\[
P_G(x_1, \dots, x_n) \cdot \left ( \sum_{v \in V_1} x_v \right ) \cdot \left ( \sum_{v \in V_2} x_v \right ) \leq  \prod_{v \in V(G)} \sum_{u \in N_G(v)} x_u 
\]
holds.
\end{conjecture}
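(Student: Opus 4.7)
\medskip
\noindent\textbf{Proof proposal.} The polynomial form trivially specializes to the original Ehrenborg conjecture by setting $x_v\equiv 1$, so the plan is to establish the converse: assuming the original inequality holds for every bipartite graph, deduce the polynomial inequality for any fixed bipartite $G$ at any fixed non-negative point. I would do this by applying the integer conjecture not to $G$ but to a carefully chosen integer blow-up of $G$, then unpacking the result via Theorem~\ref{th:main}.

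Since $P_G$ is homogeneous of degree $n-2$, both sides of the polynomial inequality are homogeneous of degree $n$; by continuity and scaling it therefore suffices to prove it at every point $(k_1,\dots,k_n)$ with positive integer coordinates. Fix such a vector and let $G'$ be obtained from $G$ by replacing each vertex $v$ with $k_v$ twin copies sharing the same neighborhood and no mutual edges (iterated duplication without edge). Then $G'$ is again bipartite, with part sizes $\sum_{v\in V_i}k_v$, and each copy of $v$ has degree $\sum_{u\in N_G(v)}k_u$. Iterating the case of Theorem~\ref{th:main} that corresponds to one duplication without edge---where the $P_{G_2}$-factor contributes exactly $\sum_{u\in N_G(v)}x_u$---and specializing every variable to $1$ yields the formula
\[
\tau(G')=P_G(k_1,\dots,k_n)\cdot\prod_{v\in V(G)}\Bigl(\sum_{u\in N_G(v)}k_u\Bigr)^{k_v-1}.
\]
Plugging this into the assumed Ehrenborg inequality for $G'$, whose right-hand side evaluates to $\prod_{v\in V(G)}\bigl(\sum_{u\in N_G(v)}k_u\bigr)^{k_v}$, and cancelling one factor per vertex, produces exactly the polynomial inequality at $(k_1,\dots,k_n)$, completing the reduction.

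The main obstacle is the bookkeeping in the iterated application of Theorem~\ref{th:main} that produces the formula for $\tau(G')$ above. At each duplication step the new linear factor $\sum_{u\in N_G(v)}x_u$ is expressed in the current graph's variables, and subsequent duplications replace each $x_u$ by a sum of twin-copy variables. Since these substitutions at distinct vertices commute, a single clean induction on the number of duplicated vertices suffices; once the single-duplication case is verified, the rest carries through. All remaining ingredients---the homogeneity check, the continuity passage from positive integers to non-negative reals, and the identification of $|V'_i|$ and the degree sequence of $G'$---are routine.
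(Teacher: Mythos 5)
Your proposal does not (and cannot) prove the conjecture outright; like the paper, it establishes the nontrivial direction of the equivalence with the integer form, and it does so by essentially the same argument: reduce to positive integer points (the paper uses continuity plus clearing denominators rather than homogeneity, an immaterial difference), blow up $G$ by iterated duplication without edges, compute $\tau(G')=P_G(k_1,\dots,k_n)\prod_v(\sum_{u\in N_G(v)}k_u)^{k_v-1}$ via Theorem~\ref{th:main}, and cancel against the integer Ehrenborg inequality for $G'$. This matches the paper's proof of the equivalence theorem.
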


The following theorem provides an additional numerical support for the Ehrenborg’s conjecture.

\begin{theorem}
    The polynomial form of Ehrenborg’s conjecture is equivalent to the usual form of Ehrenborg’s conjecture.
\end{theorem}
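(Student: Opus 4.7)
The forward direction (polynomial form implies usual form) is immediate: substituting $x_1 = \cdots = x_n = 1$ turns $P_G(1,\dots,1)$ into $\tau(G)$, each part-sum $\sum_{v \in V_j} x_v$ into $|V_j|$, and each neighborhood sum $\sum_{u \in N_G(v)} x_u$ into $d_G(v)$, so the polynomial inequality collapses to the usual one.

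For the converse, my plan is a blow-up argument. Assume $G$ is connected (otherwise both forms are trivial) and fix positive integers $x_1, \dots, x_n$. Construct the bipartite graph $G'$ by replacing each vertex $v \in V(G)$ with $x_v$ pairwise non-adjacent twins, via $x_v - 1$ iterations of the duplication-without-edge operation on (a copy of) $v$. Since duplication without an edge preserves bipartiteness and connectivity, $G'$ is a connected bipartite graph with $|V_j'| = \sum_{v \in V_j} x_v$ for $j = 1, 2$, and every copy of $v$ has degree $\sum_{u \in N_G(v)} x_u$ in $G'$.

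Assigning a formal variable $y_{v,i}$ to the $i$-th copy of $v$ and setting $Y_v := y_{v,1} + \cdots + y_{v,x_v}$, the next step is to establish by induction on the total number of duplications, using Theorem~\ref{th:main} in the duplication-without-edge specialization from the proof of Corollary~\ref{cor:main}, the identity
\[
P_{G'}(y) \;=\; P_G(Y_1, \dots, Y_n) \cdot \prod_{v \in V(G)} \Bigl(\sum_{u \in N_G(v)} Y_u\Bigr)^{x_v - 1}.
\]
Setting all $y_{v,i} = 1$ yields $Y_v = x_v$, and hence the key formula
\[
\tau(G') \;=\; P_G(x_1, \dots, x_n) \cdot \prod_{v \in V(G)} \Bigl(\sum_{u \in N_G(v)} x_u\Bigr)^{x_v - 1}.
\]

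Applying the usual form of Ehrenborg's conjecture to $G'$ then reads
\[
\tau(G') \cdot \Bigl(\sum_{v \in V_1} x_v\Bigr) \Bigl(\sum_{v \in V_2} x_v\Bigr) \;\leq\; \prod_{v \in V(G)} \Bigl(\sum_{u \in N_G(v)} x_u\Bigr)^{x_v};
\]
substituting the above expression for $\tau(G')$ and cancelling the common factor $\prod_v (\sum_{u \in N_G(v)} x_u)^{x_v - 1}$ produces exactly the polynomial inequality at $(x_1, \dots, x_n) \in \mathbb{Z}_{>0}^n$. Both sides of the polynomial inequality are homogeneous of degree $n$, so the result extends to $\mathbb{Q}_{>0}^n$ by scaling and then to $\mathbb{R}_{\geq 0}^n$ by continuity. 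The main obstacle is pinning down the inductive identity for $P_{G'}$: during each intermediate duplication the multiplicative factor is a neighborhood sum involving the current mixture of $Y_u$'s and of still-original $x_u$'s, and one has to verify that under $y_{v,i} = 1$ all of these simultaneously reduce to $\sum_{u \in N_G(v)} x_u$ with the correct exponent. Once this bookkeeping is settled, the rest is direct substitution.
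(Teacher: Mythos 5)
Your proposal is correct and follows essentially the same route as the paper: the same blow-up of each vertex into $x_v$ non-adjacent twins, the same key identity $\tau(G')=P_G(x)\cdot\prod_v\bigl(\sum_{u\in N_G(v)}x_u\bigr)^{x_v-1}$ obtained from Theorem~\ref{th:main}, and the same cancellation against the usual Ehrenborg inequality for $G'$. The only (immaterial) difference is that the paper argues by contradiction, perturbing a real counterexample to rational and then integer weights, whereas you prove the integer case directly and extend by homogeneity of degree $n$ and density; the bookkeeping you worry about is handled in the paper by the same one-line observation that merging twin copies preserves weighted neighborhood sums.
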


\begin{proof}
    If the polynomial form of Ehrenborg’s conjecture holds, then by setting each variable in \( P_G \) to 1, we obtain the usual form of Ehrenborg’s conjecture.

    To prove the reverse implication, suppose that the polynomial conjecture is false, meaning there exist a graph \( G \) and non-negative numbers \( x_1, \dots, x_n \) such that
    \begin{equation} \label{eq:wrongEre}
    P_G(x_1, \dots,x_n) \cdot \left (\sum_{i \in V_1} x_i \right) \left (\sum_{i \in V_2} x_i \right) > \prod_{i \in V} \left (\sum_{j \in N_G(i)}x_j \right).
    \end{equation}    
    By continuity, there exist positive rational numbers \( y_1, y_2, \dots, y_n \) such that inequality~\eqref{eq:wrongEre} still holds for them. We can multiply by a common denominator, obtaining that there exist natural numbers \( z_1, \dots, z_n \) for which inequality~\eqref{eq:wrongEre} holds.

    Then we construct a graph \( H \) where the usual form of Ehrenborg’s conjecture fails. To do this, replace each vertex \( v_i \) in \( G \) with \( z_i \) copies (i.e., perform \( z_i - 1 \) duplications without adding edges).
    Clearly, the resulting graph \( H \) is bipartite. By definition the number $\tau(H)$ of spanning trees in $H$ is equal to $P_H(1,1, \dots,1)$.
    Several applications of Theorem~\ref{th:main} with \( G_2 = (\{1,2,3\},\{\{1,2\},\{1,3\}\}) \), \( v_2 = 1 \) give
    \[
    P_H(1,1, \dots,1) = P_G(z_1, \dots,z_n) \cdot \prod_{i \in V(G)} \left(\sum_{j \in N_G(i)} z_j\right)^{z_i-1}.
    \]
    This holds because replacing vertices with weights \( k \) and 1 (being copies of each other) with a vertex of weight \( k+1 \) does not change the (weighted) degrees of vertices. Let us write Ehrenborg’s inequality for the graph \( H \):
    \[
    \tau(H) \cdot \left (\sum_{i \in V_1(G)} z_i \right) \left (\sum_{i \in V_2(G)} z_i \right ) \leq \prod_{i \in V(G)}\left(\sum_{j \in N_G(i)}z_i \right)^{z_i}.
    \]
    Dividing both sides by \( \prod_{i \in V}\left(\sum_{j \in N_G(i)} z_j\right)^{z_i-1} \) yields a contradiction with inequality~\eqref{eq:wrongEre}.
\end{proof}

\subsection{Factorization of the enumerator}

What can we say about a graph whose vertex enumerator factors? If all factors are linear, we know that the graph is distance-hereditary~\cite{cherkashin2023stability}. Recall that an example of a graph with a non-trivial factorization is two disjoint copies of \( C_5 \) with all possible edges between them.

Theorem~\ref{th:oldmain} immediately implies the recent result of Gao and Liu.

\begin{corollary}[Gao--Liu~\cite{gao2024tree}, 2024]
 The extended enumerator of a permutation graph \( G_w \) factors into linear terms if and only if \( G_w \) is a cograph.
\end{corollary}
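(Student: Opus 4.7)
The plan is to apply Theorem~\ref{th:oldmain} and thereby reduce the statement to a purely structural question: $P_{\widetilde{G_w}}$ factors into linear terms if and only if $\widetilde{G_w}$ is distance-hereditary, so it suffices to show that for an arbitrary graph $G$, the extension $\widetilde{G}$ is distance-hereditary if and only if $G$ is a cograph. The permutation-graph hypothesis plays no further role; the corollary is just the specialization $G = G_w$ of this more general equivalence, and I would remark on that.

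For the ``only if'' direction, suppose $G$ contains an induced $P_4$ on vertices $a,b,c,d$. Since the added vertex $v_0$ of $\widetilde{G}$ is adjacent to every vertex of $G$, the induced subgraph $\widetilde{G}[\{v_0,a,b,c,d\}]$ consists of the three edges of the path together with the four edges from $v_0$, which is precisely the gem of Fig.~\ref{fig:forbidden} (that is, $P_4$ joined to a single vertex). By the forbidden-subgraph characterization~(v) of distance-hereditary graphs, $\widetilde{G}$ is not distance-hereditary, and so $P_{\widetilde{G_w}}$ does not factor into linear terms.

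Conversely, suppose $G$ is a cograph. Then $\widetilde{G}$ is also a cograph: any induced $P_4$ in $\widetilde{G}$ cannot contain $v_0$, because inside a $P_4$ no vertex is adjacent to all three others, whereas $v_0$ is adjacent to everything. Such a $P_4$ would therefore lie entirely inside $G$, contradicting the assumption. Since every cograph is distance-hereditary, $\widetilde{G}$ is distance-hereditary, and Theorem~\ref{th:oldmain} yields the required factorization.

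The main obstacle is essentially absent: the substantive content, namely the constructive factorization of $P_G$ for a distance-hereditary $G$ and its converse, is already packaged inside Theorem~\ref{th:oldmain}. What remains is a short forbidden-subgraph check; the only mildly delicate point is to confirm from Fig.~\ref{fig:forbidden} that the paper's ``gem'' is indeed $P_4$ with an added universal vertex, which one reads directly off the figure's edge list.
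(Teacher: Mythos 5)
Your proposal is correct and follows essentially the same route as the paper: both reduce the corollary via Theorem~\ref{th:oldmain} to the lemma that $\widetilde{G}$ is distance-hereditary if and only if $G$ is a cograph, with the same gem argument for the ``only if'' direction. The only (harmless) variation is in the converse: you observe that $\widetilde{G}$ is itself a cograph and invoke ``cograph $\Rightarrow$ distance-hereditary,'' whereas the paper checks the forbidden induced subgraphs of Fig.~\ref{fig:forbidden} directly; both are valid.
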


In fact, we will prove a more general statement.

\begin{theorem} \label{th:forestcounting}
 The extended enumerator of a graph \( G \) factors into linear terms if and only if \( G \) is a cograph.
\end{theorem}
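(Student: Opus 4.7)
The proposed plan is to reduce Theorem~\ref{th:forestcounting} to Theorem~\ref{th:oldmain} applied not to $G$ itself but to its extension $\widetilde G$. Since $P_{\widetilde G}(x_0,x_1,\ldots,x_n)$ is literally the vertex spanning enumerator of the graph $\widetilde G$, Theorem~\ref{th:oldmain} tells us that it factors into linear terms if and only if $\widetilde G$ is distance-hereditary. Hence the statement reduces to the purely structural claim: \emph{$\widetilde G$ is distance-hereditary if and only if $G$ is a cograph}.

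For the easy direction, suppose $G$ is a cograph, i.e.\ contains no induced $P_4$. I would first observe that $\widetilde G$ is then also a cograph: the new universal vertex $v_0$ is adjacent to every other vertex, so it has no non-neighbor in $\widetilde G$ and therefore cannot appear as an endpoint or an internal vertex of any induced $P_4$; any induced $P_4$ in $\widetilde G$ would have to sit inside $G$, contradiction. Since every cograph is distance-hereditary (this is noted in the text after the threshold-graph discussion), $\widetilde G$ is distance-hereditary, and Corollary~\ref{cor:main} gives the desired factorization.

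For the converse, assume $G$ is not a cograph, so $G$ contains an induced $P_4$ on some vertices $a,b,c,d$. In $\widetilde G$ the universal vertex $v_0$ is adjacent to all of $a,b,c,d$, and no non-$P_4$ edges exist among $\{a,b,c,d\}$; therefore the five vertices $\{v_0,a,b,c,d\}$ induce precisely the gem. By definition~(v) of distance-hereditary graphs from the introduction, the gem is a forbidden induced subgraph, so $\widetilde G$ is not distance-hereditary. Applying the equivalence (i)$\Leftrightarrow$(ii) of Theorem~\ref{th:oldmain} to $\widetilde G$ then shows that $P_{\widetilde G}$ does not factor into linear terms, which is what we wanted.

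The whole argument is essentially bookkeeping on top of Theorem~\ref{th:oldmain}; there is no real obstacle. The only point that needs a line of care is the ``gem'' verification in the converse direction: one must check that the $P_4$ is \emph{induced} in $G$ (so that none of the chords $ac$, $bd$, $ad$ are present), which ensures the five vertices $\{v_0,a,b,c,d\}$ induce exactly the gem and not some richer graph that could still be distance-hereditary. Given the hypothesis that the $P_4$ is induced, this is immediate.
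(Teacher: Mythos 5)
Your proposal is correct and follows essentially the same route as the paper: both reduce the theorem to the lemma that $\widetilde{G}$ is distance-hereditary if and only if $G$ is a cograph, prove the converse by observing that $v_0$ together with an induced $P_4$ forms a gem, and then invoke Theorem~\ref{th:oldmain}. The only cosmetic difference is in the easy direction, where you note that $\widetilde{G}$ is itself a cograph (the universal vertex cannot lie on an induced $P_4$), while the paper checks directly that none of the forbidden subgraphs of Fig.~\ref{fig:forbidden} can contain $v_0$; both arguments are fine.
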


\begin{proof}
We need the following elementary lemma.

\begin{lemma}
The extension \( \widetilde{G} \) of a graph \( G \) is distance-hereditary if and only if \( G \) is a cograph.
\end{lemma}

\begin{proof}
If \( G \) is a cograph, it does not contain the forbidden graphs in Fig.~\ref{fig:forbidden}. Therefore, any potential forbidden subgraph must contain the added vertex \( v_0 \). Since \( v_0 \) is connected to all vertices, the only possible configuration is for \( v_0 \) to be vertex number 1 in the gem. However, the remaining vertices form \( P_4 \), which is also impossible.

If \( \widetilde{G} \) is distance-hereditary, \( G \) cannot contain \( P_4 \) since adding \( v_0 \) to \( P_4 \) forms an gem.
\end{proof}

Theorem~\ref{th:forestcounting} now follows from Theorem~\ref{th:oldmain}.

\end{proof}

\bibliographystyle{plain}
\bibliography{trees}

\begin{thebibliography}{10}

\bibitem{bandelt1986distance}
Hans-J{\"u}rgen Bandelt and Henry~Martyn Mulder.
\newblock Distance-hereditary graphs.
\newblock {\em Journal of Combinatorial Theory, Series B}, 41(2):182--208,
  1986.

\bibitem{bogdanowicz2024number}
Zbigniew~R. Bogdanowicz.
\newblock The number of spanning trees in a superprism.
\newblock {\em Discrete Mathematics Letters}, 13:66--73, 2024.

\bibitem{bose1998pattern}
Prosenjit Bose, Jonathan~F. Buss, and Anna Lubiw.
\newblock Pattern matching for permutations.
\newblock {\em Information Processing Letters}, 65(5):277--283, 1998.

\bibitem{brandstadt1999graph}
Andreas Brandst{\"a}dt, Van~Bang Le, and Jeremy~P. Spinrad.
\newblock {\em Graph classes: a survey}.
\newblock SIAM, 1999.

\bibitem{cherkashin2023stability}
Danila Cherkashin, Fedor Petrov, and Pavel Prozorov.
\newblock On stability of spanning tree degree enumerators.
\newblock {\em Discrete Mathematics}, 346(12):113629, 2023.

\bibitem{dong2022counting}
Fengming Dong and Jun Ge.
\newblock Counting spanning trees in a complete bipartite graph which contain a
  given spanning forest.
\newblock {\em Journal of Graph Theory}, 101(1):79--94, 2022.

\bibitem{ehrenborg2004enumerative}
Richard Ehrenborg and Stephanie van Willigenburg.
\newblock Enumerative properties of {F}errers graphs.
\newblock {\em Discrete \& Computational Geometry}, 32:481--492, 2004.

\bibitem{gao2024tree}
Yibo Gao and Siyu Liu.
\newblock Tree enumeration polynomials on separable permutations.
\newblock {\em Enumerative Combinatorics and Applications}, 4(1), 2024.

\bibitem{go2022spanning}
Christian Go, Khwa~Zhong Xuan, Xinyu Luo, and Matthew~T. Stamps.
\newblock Spanning tree enumeration and nearly triangular graph {L}aplacians.
\newblock {\em Discrete Applied Mathematics}, 320:356--369, 2022.

\bibitem{kirchhoff1847ueber}
Gustav Kirchhoff.
\newblock Ueber die {A}ufl{\"o}sung der {G}leichungen, auf welche man bei der
  {U}ntersuchung der linearen {V}ertheilung galvanischer {S}tr{\"o}me
  gef{\"u}hrt wird.
\newblock {\em Annalen der Physik}, 148(12):497--508, 1847.

\bibitem{klee2019linear}
Steven Klee and Matthew~T. Stamps.
\newblock Linear algebraic techniques for weighted spanning tree enumeration.
\newblock {\em Linear Algebra and its Applications}, 582:391--402, 2019.

\bibitem{klee2020linear}
Steven Klee and Matthew~T. Stamps.
\newblock Linear algebraic techniques for spanning tree enumeration.
\newblock {\em The American Mathematical Monthly}, 127(4):297--307, 2020.

\bibitem{mednykh2023cyclic}
A.~D. Mednykh and I.~A. Mednykh.
\newblock Cyclic coverings of graphs. counting rooted spanning forests and
  trees, {K}irchhoff index, and {J}acobians.
\newblock {\em Russian Mathematical Surveys}, 78(3):501--548, 2023.

\bibitem{merris1994degree}
Russell Merris.
\newblock Degree maximal graphs are {L}aplacian integral.
\newblock {\em Linear Algebra and its Applications}, 199:381--389, 1994.

\bibitem{oum2005rank}
Sang-il Oum.
\newblock Rank-width and vertex-minors.
\newblock {\em Journal of Combinatorial Theory, Series B}, 95(1):79--100, 2005.

\bibitem{pak1990enumeration}
Igor~M. Pak and Alexander~E. Postnikov.
\newblock Enumeration of spanning trees of certain graphs.
\newblock {\em Russian Mathematical Surveys}, 45:220--221, 1990.

\bibitem{pozorov2025stability}
P.~K. Prozorov and D.~D. Cherkashin.
\newblock On stability of weighted spanning tree degree enumerators.
\newblock {\em Izvestiya: Mathematics}, 89(1):106--124, 2025.

\bibitem{yang2025counting}
Chenlin Yang and Tao Tian.
\newblock Counting spanning trees of multiple complete split-like graph
  containing a given spanning forest.
\newblock {\em Discrete Mathematics}, 348(2):114300, 2025.

\end{thebibliography}

\end{document}